\documentclass{amsart}
\usepackage{amssymb}
\usepackage{amsthm}
\newcommand{\pn}{\par\noindent}
\newcommand{\pmn}{\par\medskip\noindent}
\newcommand{\pbn}{\par\bigskip\noindent}
\newtheorem{theor}{Theorem}[section]
\newtheorem{prop}{Proposition}[section]
\theoremstyle{definition} 
\newtheorem{defin}{Definition}[section]
\newtheorem{ex}{Example}[section] \theoremstyle{remark}
\newtheorem{rem}{Remark}[section]
\begin{document}
\title{On arithmetic of plane trees}
\author{Yury Kochetkov}
\begin{abstract} In \cite{Zapponi} L. Zapponi studied the
arithmetic of plane bipartite trees with prime number of edges. He
obtained a lower bound on the degree of a tree's definition field.
Here we obtain a similar lower bound in the following case. There
exists a prime $p$ such, that: a) the number of edges is divisible
by $p$, but not by $p^2$; b) for any proper subset of white (or
black) vertices the sum of their degrees is not divisible by this
$p$.
\end{abstract} \email{yukochetkov@hse.ru, yuyukochetkov@gmail.com}
\maketitle

\section{Introduction}
\pn Let $T$ be a plane (i.e. embedded into plane) bipartite tree
with $N$ edges, $n$ white vertices $v_1,\ldots,v_n$ and $m$ black
ones $u_1,\ldots,u_m$: $n+m=N+1$. Let $k_1,\ldots,k_n$ be degrees
of vertices $v_1,\ldots,v_n$, respectively, and $l_1,\ldots,l_m$
be degrees of $u_1,\ldots,u_m$. Thus, $P=\langle
k_1,\ldots,k_n\,|\,l_1,\ldots, l_m\,\rangle$ is the passport of
the tree $T$. \pmn We will assume that $N=p\,r$, where $p$ is a
prime and $r$ is coprime with $p$. \pmn If there exists a proper
subset
$$\{k_{i_1},\ldots,k_{i_t}\}\subset\{k_1,\ldots,k_n\}$$ such, that
the sum $k_{i_1}+\ldots+k_{i_t}$ is divisible by $p$, then the
passport $P$ will be called \emph{white-decomposable}. Analogously
can be defined a \emph{black-decomposable} passport. \pmn A
passport cannot be simultaneously white- and black-indecomposable.
This statement is a consequence of the following proposition.

\begin{prop} Let $\Pi$ is a partition of a number $x$, $l$ --- the
number of elements in $\Pi$ and $l>\frac x2$. Then for each $y$,
$0<y<x$, there exists a subset of $\Pi$ such, that the sum of its
elements is $y$.\end{prop}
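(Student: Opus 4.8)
The plan is to fix a nondecreasing enumeration of the parts of $\Pi$, say $a_1\le a_2\le\cdots\le a_l$ with $\sum_{i=1}^l a_i = x$, and to establish the classical completeness criterion for subset sums: I will show that $a_1=1$ and that $a_{k+1}\le 1+S_k$ for every $k$, where $S_k=a_1+\cdots+a_k$. Once this is in hand, an easy induction shows that the set of subset sums of $a_1,\ldots,a_k$ is exactly the full interval $\{0,1,\ldots,S_k\}$; taking $k=l$ then produces every integer in $\{0,\ldots,x\}$, and in particular every $y$ with $0<y<x$, which is the assertion.

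The heart of the argument, and the only place where the hypothesis $l>\tfrac x2$ is used, is the inequality $a_{k+1}\le 1+S_k$. I would prove it by contradiction: suppose $a_{k+1}\ge S_k+2$ for some $k$. Since the sequence is nondecreasing, each of the $l-k$ parts $a_{k+1},\ldots,a_l$ is then at least $S_k+2$, so, writing $s=S_k$ and $j=l-k\ge 1$, I obtain $x\ge s+j(s+2)=s(j+1)+2j$. Using the crude bound $s\ge k$ (the first $k$ parts are positive integers) this gives $x\ge k(j+1)+2j=2(k+j)+k(j-1)\ge 2l$, because $k(j-1)\ge 0$. But $l>\tfrac x2$ means $2l>x$, a contradiction. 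The case $k=0$ (the statement $a_1=1$) is the same computation with $s=0$: if $a_1\ge 2$ then every part is $\ge 2$ and $x\ge 2l>x$.

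The completion is the routine induction on $k$. With $A_k$ denoting the set of subset sums of $\{a_1,\ldots,a_k\}$, one has $A_{k+1}=A_k\cup(A_k+a_{k+1})$, and the inequality $a_{k+1}\le S_k+1$ guarantees that the shifted copy $A_k+a_{k+1}=\{a_{k+1},\ldots,S_{k+1}\}$ meets the already-covered interval $\{0,\ldots,S_k\}$ with no gap. Hence $A_{k+1}=\{0,\ldots,S_{k+1}\}$, and at $k=l$ we recover all of $\{0,\ldots,x\}$.

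I expect the sole real obstacle to be the bound on $a_{k+1}$; the rest is bookkeeping. The delicate point there is to track the two lower bounds $s\ge k$ and $j\ge 1$ simultaneously and to notice that together they force $x\ge 2l$, directly contradicting $l>\tfrac x2$. Once the contradiction hypothesis $a_{k+1}\ge S_k+2$ is set up correctly, the estimate reduces to the elementary identity $s(j+1)+2j-2l=k(j-1)\ge 0$.
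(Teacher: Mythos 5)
Your proof is correct, but it takes a genuinely different route from the paper. The paper argues by contradiction with an extremal exchange: assuming some $y$ is unreachable, it picks a subset $\Xi_0$ whose sum is closest to $y$ from below, shows (using $l>\frac{x}{2}$) that $\Pi$ must contain units and that all of them lie in $\Xi_0$, and then derives $b\geqslant a-1$ (with $a$ the smallest element outside $\Xi_0$ and $b$ the number of units), so that swapping $a$ in and $a-1$ units out increases the sum by exactly $1$ and contradicts minimality. You instead sort the parts $a_1\leqslant\cdots\leqslant a_l$ and establish the classical completeness criterion $a_{k+1}\leqslant 1+S_k$, with the hypothesis $l>\frac{x}{2}$ entering only there: your computation $x\geqslant s(j+1)+2j\geqslant 2l+k(j-1)\geqslant 2l$ under the assumption $a_{k+1}\geqslant S_k+2$ is correct (the two bounds $s\geqslant k$ and $j\geqslant 1$ do combine as you claim), and the interval induction $A_{k+1}=A_k\cup(A_k+a_{k+1})=\{0,\ldots,S_{k+1}\}$ is the standard bookkeeping. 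Both proofs exploit $l>\frac{x}{2}$ in the same counting spirit (too many parts of size $\geqslant 2$ would overshoot $x$), but yours buys something more: it is constructive (a greedy selection realizes each $y$) and proves the stronger statement that the subset sums form the entire interval $\{0,1,\ldots,x\}$, whereas the paper's argument is purely existential; on the other hand, the paper's exchange argument is self-contained and avoids any appeal to ordering or induction on prefixes.
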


\begin{proof} Let us assume that for some $y$ such subset doesn't
exist. For each subset $\Xi\in\Pi$ such, that the sum $S_\Xi$ of
its elements is $<y$, let $d_\Xi$ be the difference
$d_\Xi=y-S_\Xi$. And let for a subset $\Xi_0$ this difference be
minimal. \pmn The partition $\Pi$ has units as its elements
(otherwise, the sum of elements of $\Pi$ will be greater, than
$x$). All this units belong to the subset $\Xi_0$ (otherwise, the
move of one unit from $\Pi\setminus\Xi_0$ to $\Xi_0$ diminishes
$d_{\Xi_0}$). \pmn Let $a$ be the minimal element in
$\Pi\setminus\Xi_0$ and let $\Pi$ has $b$ units (and all of them
are in $\Xi_0$). We have,
$$a+b+2(l-b-1)=a-2-b+2l\leqslant x\Rightarrow a-2-b<0
\Rightarrow b>a-2.$$ Thus, $\Xi_0$ contains not less, than $a-1$
units, and by moving $a$ from $\Pi\setminus\Xi_0$ to $\Xi_0$ and
$a-1$ units from $\Xi_0$ to $\Pi\setminus\Xi_0$ we diminish
$d_{\Xi_0}$. Contradiction. \end{proof} \pmn Let $T$ be a plane
bipartite tree with the passport $P$ and let $b$ be a Shabat
polynomial for $T$ such, that white vertex $v_1$ be at the origin
and black vertex $u_1$ --- at the point $1$. This choice defines
positions of all other vertices. So, let $x_1=0,x_2,\ldots,x_n$ be
coordinates of white vertices $v_1,v_2,\ldots,v_n$, respectively,
and $y_1=1,y_2,\ldots,y_m$ be coordinates of black vertices
$u_1,\ldots,u_m$. Then
$$b(z)=\prod_{i=1}^n(z-x_i)^{k_i}.$$ As $c=b(1)$ is the value of
the Shabat polynomial $b$ in black vertices, then
$$b(z)-c=\prod_{i=1}^m(z-y_i)^{l_i}.$$ Let $K$ be the \emph{big
definition field} that contains coordinates of all white and black
vertices (and the number $c$ also). Let $\rho$ be a prime divisor
(see \cite{Bor}) of $K$ that divides prime $p$ and $v_\rho$ be the
corresponding valuation, i.e. $v_\rho(x)=a$, if $x=\rho^ay$, where
$y$ and $\rho$ are coprime. Let $O=\{x\in K\,|\,v_\rho(x)\geqslant
0\}$ be the set of $\rho$-integral numbers and $I=\{x\in
K\,|\,v_\rho(x)>0\}$ be the maximal ideal in $O$. Then $O/I$ is a
finite field of characteristic $p$. And let, at last,
$e_K=v_\rho(p)$ be the ramification index. \pmn In Section 2 we
prove the existence of \emph{normalized model}, i.e. such Shabat
polynomial of our tree $T$, that one white vertex is at the
origin, one black vertex is at $1$ and coordinates of all vertices
are $\rho$-integral. \pmn The main result of this work is Theorem
3.1: if the passport is white-indecomposable, then in the scope of
normalized model $v_\rho(x_i)=e_K/(n-1)$ for all $x_i\neq 0$.

\section{A normalized model}
\pn In this section we will construct a special Shabat polynomial
for our tree $T$ --- a \emph{normalized model} \cite{Zapponi}, and
will study its properties. \pmn If $\min_i v_\rho(y_i)=a<0$ and
$v_\rho(y_j)=a$, then we will perform a coordinate change: now
$x_i/y_j$, $i=1,\ldots,n$ are coordinates of white vertices and
$y_i/y_j$ are coordinates of black ones. Let us note that the
vertex $v_1$ remains at origin and at point 1 now is the vertex
$u_j$. We will continue to use notations $x_i$, $y_i$ and $c$ for
coordinates of white vertices, black vertices and the value of $b$
at 1. \pmn As all black coordinates now are $\rho$-integral, then
the polynomial
$$\prod_{i=1}^m(z-y_i)^{l_i}$$ has $\rho$-integral coefficients.
Thus, the polynomial
$$b(z)=\prod_{i=1}^m(z-y_i)^{l_i}+c$$ has $\rho$-integral
coefficients (because $x_1=0$). Hence, all white coordinates are
$\rho$-integral and the number $c$ is also $\rho$-integral.

\begin{defin} A Shabat polynomial $b$ of a tree $T$ is called its
\emph{normalized model} if
\begin{itemize} \item the leading coefficient of $b$ is $1$;
\item some prime number $p$ is fixed, which divides the number of
edges; \item in the big definition field some prime divisor $\rho$
is fixed, that divides $p$\,; \item some white vertex is in origin
and some black vertex is in the point $1$; \item coordinates of
all (black and white) vertices are $\rho$-integral.
\end{itemize} \end{defin} \pmn The existence of normalized model
was proved above. Let us consider in more details its arithmetic
properties. \pmn Let
$$b(z)=\prod_{i=1}^n(z-x_i)^{k_i}=z^N+a_{N-1}z^{N-1}+
\ldots+a_1z$$ be a normalized model of our tree $T$. On one hand
$$b'(z)=Nz^{N-1}+(N-1)a_{N-1}z^{N-2}+\ldots+a_1.$$
On the other hand
$$b'(z)=N\prod_{i=1}^n(z-x_i)^{k_i-1}\prod_{i=1}^m
(z-y_i)^{l_i-1}.$$ It means, that all coefficients $a_i$ (except,
maybe, $a_p,a_{2p},\ldots,a_{rp}$) belong to the ideal $I$. Thus,
$$b\text{ mod }I=z^{rp}+b_{r-1} z^{(r-1)p}+\ldots+b_1z^p,$$
where $b_i=a_{p\,i}\text{ mod }I\in O/I$. The polynomial
$t^r+b_{r-1}t^{r-1}+\ldots+b_1t$ has $r$ roots in $O/I$ and each
of them generates a root of the polynomial $b\text{ mod }\rho$ of
multiplicity $p$ (because $x\mapsto x^p$ is the Frobenius
automorphism in the field $O/I$). Thus, $N$ roots of the
polynomial $b$ are partitioned into $r$ subsets of cardinality $p$
each, and roots in each subset are congruent modulo $\rho$. \pmn
Let, for example, roots $x_1,\ldots,x_t$ are pairwise congruent
modulo $\rho$, but other roots $x_{t+1},\ldots,x_n$ are not
congruent to them. Then the number $k_1+\ldots+k_t$ is divisible
by $p$, i.e. the passport $P$ is white-decomposable. Thus, we have
the theorem.

\begin{theor} If there exists a pair of vertices $v_i$ and $v_j$
such, that their coordinates are not congruent modulo $\rho$, then
the passport $P$ is white-decomposable. If the passport is
white-indecomposable, then coordinates of all white vertices
belong to $I$. \end{theor}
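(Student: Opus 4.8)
The plan is to leverage the analysis carried out just before the theorem statement, which has already done the heavy lifting. The key structural fact established is that reducing the normalized model $b$ modulo the maximal ideal $I$ produces
$$b\text{ mod }I=z^{rp}+b_{r-1}z^{(r-1)p}+\ldots+b_1z^p,$$
and that, via the Frobenius automorphism on the residue field $O/I$, the $N$ roots of $b$ fall into exactly $r$ congruence classes modulo $\rho$, each of size $p$ when counted with multiplicity.

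First I would prove the contrapositive of the first assertion: assume the passport $P$ is white-indecomposable and derive that all white coordinates are congruent modulo $\rho$. Given any partition of the roots into congruence classes, suppose the classes are indexed so that $x_1,\ldots,x_t$ form one class and $x_{t+1},\ldots,x_n$ lie outside it. The multiplicity contributed by this class is $k_1+\ldots+k_t$, and since each of the $r$ congruence classes has total multiplicity exactly $p$ (or a multiple of $p$, by the grouping into blocks of size $p$), this partial sum $k_1+\ldots+k_t$ is divisible by $p$. If the class were a proper subset of all white vertices, this would exhibit a white-decomposition, contradicting indecomposability. Hence under white-indecomposability there can be only one congruence class, i.e. all white coordinates are mutually congruent modulo $\rho$; this establishes the first sentence in contrapositive form.

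For the second assertion I would pin down the common residue. Since all white coordinates are congruent modulo $\rho$ and one of them, namely $x_1$, equals $0$ by the normalization, every $x_i$ satisfies $x_i\equiv 0\pmod\rho$, which is exactly the statement $x_i\in I$ for all $i$. This is immediate once the first part is in hand, so the real content is entirely in the reduction argument.

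The main obstacle I anticipate is justifying rigorously that the multiplicity $k_1+\ldots+k_t$ of a full congruence class is divisible by $p$, rather than merely observing it informally. Because the roots group into $r$ blocks of size $p$ under Frobenius, each block of $p$ mutually congruent roots (counted with multiplicity) contributes total multiplicity $p$, so any union of complete blocks has multiplicity divisible by $p$; the subtlety is confirming that a congruence class modulo $\rho$ is always a union of such blocks and never splits one. I would address this by appealing to the fact that the factorization $t^r+b_{r-1}t^{r-1}+\ldots+b_1t$ groups roots precisely according to their residues, so distinct residues correspond to disjoint unions of blocks, and the multiplicity bookkeeping then follows cleanly.
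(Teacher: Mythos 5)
Your proposal is correct and follows essentially the same route as the paper: Theorem 2.1 is proved there precisely by the preceding reduction of the normalized model modulo $I$, the Frobenius argument showing $b\bmod I$ is a $p$-th power, and the resulting fact that each congruence class of white coordinates has degree-sum divisible by $p$, which is exactly the machinery you invoke. Your only deviation is cosmetic — you argue the first assertion in contrapositive form and spell out the (correct) point that a congruence class is a union of the blocks of multiplicity $p$, hence has multiplicity a multiple of $p$ rather than exactly $p$ — which is a faithful, slightly more careful rendering of the paper's own argument.
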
 \pmn Analogous statement is valid for
black vertices.

\section{Arithmetic of trees with a white-indecomposable passport}
\pn Let $b$ be a normalized model of a tree $T$ with a
white-indecomposable passport $P$. As
$$b(z)\text{ mod }\rho=\prod_{i=1}^n(z-x_i)^{k_i}\text{ mod
}\rho=z^{N},$$ then $b(1)\equiv 1\text{ mod }\rho$. Hence,
coordinates of all black vertices are not in $I$. \pmn As
$$b'(z)=b(z)\sum_{i=1}^n\frac{k_i}{z-x_i}=N\prod_{i=1}^n
(z-x_i)^{k_i-1} \prod_{i=1}^m (z-y_i)^{l_i-1},\eqno(1)$$ then the
substitution $z=x_1=0$ gives us the relation
$$(-1)^{n-1}k_1\prod_{i=2}^n x_i=(-1)^{N-m}N\prod_{i=1}^m
y_i^{l_i-1}.$$ Thus,
$$\sum_{i=2}^n v_\rho(x_i)=e_K \Rightarrow (n-1)v\leqslant e_K,
\eqno(2)$$ where $v=\min_{i>1} v_\rho(x_i)>0$. Let $v_\rho(x_j)=v$
and let us consider new Shabat polynomial for our tree:
$$b_1(z)=x_j^{-N}b(x_jz)=\prod_{i=1}^n (z-\widetilde{x}_i)^{k_i}=
z^N+\widetilde{a}_{N-1}z^{N-1}+\ldots+\widetilde{a}_1z,$$ where
$\widetilde{x}_i=x_i/x_j$. On one hand,
$$b_1'(z)=x_j^{-N}x_j\,b'(x_jz)=N x_j^{1-n}\prod_{i=1}^n
(z-\widetilde{x}_i)^{k_i-1}\prod_{i=1}^m (x_j
z-y_i)^{l_i-1}.\eqno(3)$$ On the other hand,
$$b_1'(z)=Nz^{N-1}+(N-1)\widetilde{a}_{N-1}z^{N-2}+
\ldots+\widetilde{a}_1.$$ Some of the numbers $\widetilde{x}_i$ do
not belong to the ideal $I$. Thus, there exists a coefficient
$\widetilde{a}_t$ that also does not belong to $I$. As the
passport is white-indecomposable, then $t\text{ mod }p\neq 0$.
Hence, $v_\rho(t\,\widetilde{a}_t)=0$. But then from (3) we have
that $e_K\leqslant (n-1)v$. As $(n-1)v\leqslant e_K$ (relation
(2)), then we have the following theorem.

\begin{theor} If the passport is white-indecomposable, then
$$v_\rho(x_i)=e_K/(n-1),\quad \forall\,i>1.$$ \end{theor}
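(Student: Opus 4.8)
The plan is to combine the two valuation inequalities that sandwich $(n-1)v$ between $e_K$ from above and below. The final theorem asserts that every nonzero white coordinate $x_i$ has the same valuation $e_K/(n-1)$. By Theorem~2.1, white-indecomposability already forces all white coordinates to lie in $I$, so each $v_\rho(x_i)>0$ for $i>1$. The key arithmetic input is relation~(2), obtained by evaluating the logarithmic-derivative identity~(1) at $z=x_1=0$: comparing $\rho$-valuations of the two sides yields $\sum_{i=2}^n v_\rho(x_i)=e_K$, since the black coordinates are $\rho$-units (as shown via $b(1)\equiv 1\bmod\rho$). This immediately gives $(n-1)v\leqslant e_K$ where $v=\min_{i>1}v_\rho(x_i)$.

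**The heart of the argument is the reverse inequality** $e_K\leqslant (n-1)v$, and this is the step I expect to be the main obstacle, since it requires extracting sharp valuation information from a rescaled model rather than from a bare divisibility count. First I would pass to the rescaled Shabat polynomial $b_1(z)=x_j^{-N}b(x_jz)$, where $x_j$ realizes the minimum $v_\rho(x_j)=v$. This normalization makes $v_\rho(\widetilde{x}_j)=0$, so at least one of the new roots $\widetilde{x}_i=x_i/x_j$ escapes the ideal $I$. Consequently some coefficient $\widetilde{a}_t$ of $b_1$ is a $\rho$-unit. The crucial use of the hypothesis enters here: white-indecomposability guarantees $t\not\equiv 0\bmod p$, so the factor $t$ appearing in the derivative $b_1'$ is itself a $\rho$-unit, whence $v_\rho(t\,\widetilde{a}_t)=0$.

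**Finally I would read off the valuation of the leading factor** $Nx_j^{1-n}$ from the product formula~(3) for $b_1'$. Since the coefficient $t\,\widetilde{a}_t$ of $b_1'$ has valuation $0$ while the unit part of the product $\prod(z-\widetilde{x}_i)^{k_i-1}\prod(x_jz-y_i)^{l_i-1}$ contributes nonnegative valuation, the scalar $Nx_j^{1-n}$ must have nonpositive valuation; computing $v_\rho(Nx_j^{1-n})=e_K-(n-1)v$ then forces $e_K-(n-1)v\leqslant 0$, i.e. $e_K\leqslant(n-1)v$. Combining with~(2) gives $(n-1)v=e_K$, and since $v$ is the \emph{minimum} over $i>1$ while the valuations sum to exactly $e_K=(n-1)v$, no individual $v_\rho(x_i)$ can exceed $v$; hence all are equal to $v=e_K/(n-1)$, which is the claim.
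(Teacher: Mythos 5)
Your proposal is correct and follows essentially the same route as the paper: the lower bound $(n-1)v\leqslant e_K$ from substituting $z=x_1=0$ into (1), the rescaled model $b_1(z)=x_j^{-N}b(x_jz)$ with a unit coefficient $\widetilde{a}_t$ whose index satisfies $t\not\equiv 0 \bmod p$ by white-indecomposability, and the comparison with the scalar $Nx_j^{1-n}$ in (3) to get $e_K\leqslant (n-1)v$. Your closing observation --- that the sum $\sum_{i=2}^n v_\rho(x_i)=e_K=(n-1)v$ with each term at least $v$ forces all terms to equal $v$ --- is exactly the step the paper leaves implicit, so you have if anything been slightly more complete.
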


\pn The substitution $z=x_i$ in (1) and analogous reasoning give
us relation
$$v_\rho(x_i-x_j)=e_K/(n-1),\quad \forall\, 1\leqslant i<j\leqslant n.$$

\begin{rem} We want to obtain some estimation on the degree of the
field of definition (see \cite{Zv}) $L$ of a tree $T$. As $L$ is a
subfield of the field $K$, then let $\tau$ be a prime divisor in
$L$ that divides $p$ and is divisible by $\rho$. If $x,y\in L$,
then
$$\frac{v_\rho(x)}{v_\rho(y)}=\frac{v_\tau(x)}{v_\tau(y)}.$$ In
particular,
$$\frac{e_K}{v_\rho(x)}=\frac{e_L}{v_\tau(x)}, \eqno(4)$$ where
$e_L=v_\tau(p)$ is the ramification index of the field $L$.
\end{rem}

\begin{rem} If $N=p^sr$, $s>1$, then Theorem 2.1 is valid and
the statement of Theorem 3.1 is as follows:
$$v_\rho(x_i)=se_K/(n-1),\quad \forall\,i>1.$$
\end{rem}

\section{Examples}

\begin{ex} Let $T$ be a tree of diameter 4 with the central
black vertex of degree 4 and four white vertices of degrees
$a<b<c<d$ (here $N=a+b+c+d$). There are 6 trees with this
passport. We assume that there exists a prime $p$ such, that
$N\equiv 0\text{ mod }p$, $N\not\equiv 0\text{ mod }p^2$ and the
passport is white-indecomposable. \pmn The white vertex of degree
$d$ we put at origin and the central black vertex
--- at $1$. Then coordinates of all other vertices are uniquely
defined. \pmn Shabat polynomial for $T$ is a normalized model.
Indeed, otherwise we have to divide all coordinates by some number
with negative valuation. After that the coordinate of central
black vertex will be at $I$ and will have positive valuation. But
the passport is white-indecomposable, so coordinates of all black
vertices have zero valuation. We have a contradiction. \pmn Our
normalized model is defined over the field $L$ --- the definition
field of the tree $T$. Moreover, $x_c$ --- the coordinate of the
white vertex of degree $c$ belongs to $L$. Thus,
$$\frac{e_K}{v_\rho(x_c)}=\frac{e_L}{v_\tau(x_c)}=n-1=3.$$
It means that the degree of $L$ is not less, than $3$. But a
conjugate to any tree belongs to the same Galois orbit, hence, the
cardinality of each orbit is even. Hence, there is one orbit of
cardinality $6$.
\end{ex}

\begin{rem} The white-indecomposability can be obtained, if
$d>p\,(r-1)$. Thus, for example, all six trees with the passport
$$\langle 15,3,2,1\,|\,4,\underbrace{1,\ldots,1}_{17}\rangle$$
belong to one orbit. \end{rem}

\begin{rem} If $a=1,b=11,c=80,d=84$, then there are two Galois
orbits: one of cardinality $4$ and one of cardinality $2$. But
here the passport is white-decomposable, because
$1+11+80+84=11\cdot 16$. \end{rem}

\begin{rem} The above results hold if $N=p^sr$ and $s$ is coprime
with $3$ (see Remark 3.2). \end{rem}

\begin{rem} The same reasoning can be applied: a) in the case of the
passport $\langle a,b,c,c\,|\,4,1,\ldots,1\rangle$, where $a,b,c$
are pairwise different (there are 3 trees with such passport); b)
in the case of the passport $\langle a,b,c,c,c\,|\,5,1,\ldots,1
\rangle$, where $a,b,c$ are pairwise different (there are 4 trees
with such passport). \end{rem}

\begin{ex} Let us consider a tree $T$ with the passport
$$\langle p+1,2,\underbrace{1,\ldots,1}_{p-3}\,|\,
p-1,\underbrace{1,\ldots,1}_{p+1}\rangle,$$ where $p$ is a prime.
There are $p-2$ such trees. Let the white vertex of degree $p+1$
be at origin and the black vertex of degree $p-1$ be at point $1$.
The corresponding Shabat polynomial is a normalized model and its
definition field coincides with the definition field of $T$.
Number $a$ --- the coordinate of the white vertex of degree $2$
belongs to $L$. Thus,
$$\frac{e_K}{v_\rho(x)}=\frac{e_L}{v_\tau(x)}=p-2.$$ Hence, all trees
with this passport are in one Galois orbit.
\end{ex}
\pbn

\pbn


\begin{thebibliography}{9}
\bibitem{Bor} Z.I. Borevich I.R. Shafarevich, {\it Number theory},
Academic Press, 1986.
\bibitem{Zv} S.K. Lando, A.K. Zvonkin, {\it Graphs on surfaces and
their applications}, Springer, 2004.
\bibitem{Zapponi} L. Zapponi, {\it The arithmetic of prime degree trees},
Int. Math. Res. Notices, 2002(4), 211-219.
\end{thebibliography}
\end{document}